\def\A{\mathcal A}
\def\C{\mathcal C}
\def\P{\mathcal P}
\def\P{\mathcal P}
\def\Lu{{\mathcal L}(\uu)}
\def\uu{\mathbf u}
\newtheorem{thm}{Theorem}
\newtheorem{coro}[thm]{Corollary}
\newtheorem{lem}[thm]{Lemma}
\newtheorem{lemma}[thm]{Lemma}
\newtheorem{pozn}[thm]{Remark}
\newtheorem{prop}[thm]{Proposition}
\newtheorem{defi}[thm]{Definition}
\newtheorem*{conjBrRe}{Brlek-Reutenauer Conjecture}
\theoremstyle{remark}
\begin{document}
\title{Proof of Brlek-Reutenauer conjecture}

\author{L{\!'}. Balková$^1$ and E. Pelantov\'a$^1$}
\address{$^1$Department of Mathematics, FNSPE, Czech Technical University in Prague, Trojanova 13, 120~00 Praha 2, Czech Republic}

\author{\v S. Starosta$^2$}
\address{$^2$Department of Applied Mathematics, FIT, Czech Technical University in Prague, Thákurova 9, 160~00 Praha 6, Czech Republic}

\maketitle
\begin{abstract}
Brlek and Reutenauer conjectured that any infinite word ${\mathbf
u}$ with language closed under reversal satisfies the equality
$2D({\mathbf u}) = \sum_{n=0}^{+\infty}T_{\mathbf u}(n)$ in which
$D({\mathbf u})$ denotes the defect of $\mathbf u$ and $T_{\mathbf
u}(n)$ denotes $\mathcal{C}_{\mathbf u}(n+1)-\mathcal{C}_{\mathbf
u}(n)  +2 - \mathcal{P}_{\mathbf u}(n+1) - \mathcal{P}_{\mathbf
u}(n)$, where $\mathcal{C}_{\mathbf u}$  and $
\mathcal{P}_{\mathbf u}$ are the factor and palindromic complexity
of ${\mathbf u}$, respectively. This conjecture was verified
for periodic words by Brlek and Reutenauer themselves. Using
their results for periodic words, we have recently proved the conjecture for
uniformly recurrent words. In the present article we prove the
conjecture in its general version by a new method without
exploiting the result for periodic words.

\end{abstract}
\section{Introduction}
Brlek and Reutenauer conjectured in~\cite{BrRe-conjecture} a nice
equality which combines together  the  factor complexity
$\mathcal{C}_{\mathbf u}$, the palindromic complexity  $
\mathcal{P}_{\mathbf u}$, and the palindromic defect  $D(\uu)$ of
an infinite word $\uu$.  It sounds as follows.

\begin{conjBrRe} 
If ${\mathbf u}$ is an infinite word with language closed under
reversal, then
$$2D(\uu)=\sum_{n=0}^{+\infty}T_{\mathbf u}(n)\,,$$
where  $T_{\mathbf u}(n)=\mathcal{C}_{\mathbf
u}(n+1)-\mathcal{C}_{\mathbf u}(n) +2 - \mathcal{P}_{\mathbf
u}(n+1) - \mathcal{P}_{\mathbf u}(n)$\,.
\end{conjBrRe}

Brlek and Reutenauer proved ibidem that their conjecture holds for
periodic infinite words. It is known from~\cite{BuLuGlZa} that the
Brlek-Reutenauer conjecture holds for words with zero defect.
In~\cite{BaPeSta4}, we proved the conjecture for uniformly
recurrent words.  In our proof,   we constructed   for any
uniformly recurrent word   $\uu$ whose language is closed under
reversal  a periodic word ${\mathbf v}$ with language closed under
reversal such that $D({\mathbf u}) = D({\mathbf v})$ and
$T_{\mathbf u}(n) = T_{\mathbf v}(n)$  for any $n$. Then we used
validity of the conjecture for periodic words.

In this paper, we will prove that the Brlek-Reutenauer conjecture
holds in full generality without exploiting the result for periodic words.
Since both sides of the equality in the Brlek-Reutenauer conjecture are non-negative, validity of the conjecture will be shown if we prove the following two theorems.

\begin{thm}\label{konecne}
If ${\mathbf u}$ is an infinite word with language closed under
 reversal such that  both  $D(\uu)$ and  $\sum_{n=0}^{+\infty}T_{\mathbf u}(n)$ are finite,
then
\begin{equation}
\label{BRconj} 2D({\mathbf u}) = \sum_{n=0}^{+\infty}T_{\mathbf
u}(n)\,.
\end{equation}
\end{thm}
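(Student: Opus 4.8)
The plan is to work with finite prefixes and pass to a limit, using that both hypotheses force everything to stabilize. Since $T_{\mathbf u}(n)\ge 0$ for every reversal-closed language (this non-negativity is exactly what makes both sides of~\eqref{BRconj} non-negative), finiteness of $\sum_n T_{\mathbf u}(n)$ yields an $n_0$ with $T_{\mathbf u}(n)=0$ for all $n\ge n_0$. Dually, finiteness of $D(\mathbf u)$ makes the defect stabilize: writing $\mathrm{Pal}_M$ for the number of distinct palindromic factors (including the empty word) of the prefix $u_0u_1\cdots u_{M-1}$, the classical fact that appending a letter creates at most one new palindrome shows that $M+1-\mathrm{Pal}_M$ is non-decreasing with increments in $\{0,1\}$, the increment being $1$ precisely when the longest palindromic suffix of that prefix has already occurred earlier. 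Hence $D(\mathbf u)=\lim_M(M+1-\mathrm{Pal}_M)$ is attained and equals the finite number of such \emph{defect positions}.

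First I would telescope. A direct computation (using $\mathcal{C}_{\mathbf u}(0)=\mathcal{P}_{\mathbf u}(0)=1$) gives, for every $N$,
\[
\sum_{n=0}^{N}T_{\mathbf u}(n)=\mathcal{C}_{\mathbf u}(N+1)-\mathcal{P}_{\mathbf u}(N+1)+2(N+1)-2\sum_{k=0}^{N}\mathcal{P}_{\mathbf u}(k).
\]
Because the language is closed under reversal, $w\mapsto\overline w$ is a fixed-point-free involution on the non-palindromic factors of each length, so $\mathcal{C}_{\mathbf u}(N+1)-\mathcal{P}_{\mathbf u}(N+1)$ equals exactly twice the number of reversal-pairs $\{w,\overline w\}$ of non-palindromic factors of length $N+1$. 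This is the structural origin of the factor $2$ in~\eqref{BRconj}, and it reduces the theorem to identifying, for large $N$, the number of such mirror-pairs with a defect count.

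The core is then to establish the exact balance, for $N$ large,
\[
\mathcal{C}_{\mathbf u}(N+1)-\mathcal{P}_{\mathbf u}(N+1)=2+2\Big(\sum_{k=0}^{N}\mathcal{P}_{\mathbf u}(k)-\mathrm{Pal}_{N+1}\Big),
\]
whose right-hand side measures, up to the boundary constant, twice the number of palindromic factors still \emph{absent} from the prefix $u_0\cdots u_N$; substituting it into the telescoped identity collapses the sum to $2D(\mathbf u)$. To prove this balance I would run the involution $\theta\colon w\mapsto\overline w$ on the Rauzy graph $\Gamma_N$ (vertices = factors of length $N$, edges = factors of length $N+1$): palindromic vertices and edges are precisely the $\theta$-fixed cells, the remaining ones fall into $\theta$-orbits of size two, and $T_{\mathbf u}(N)$ turns into an Euler-characteristic-type invariant of $\Gamma_N$ relative to $\theta$. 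One then attaches to each defect position (a recurring longest palindromic suffix, equivalently a palindrome admitting a complete return word $\Ret$ that is not itself a palindrome) exactly one surviving mirror-pair, and conversely. Since for $N\ge n_0$ both sides of the balance grow by the same increment $2(\mathcal{P}_{\mathbf u}(N)-1)$, it then suffices to anchor the equality at a single large $N$ and let it propagate to the limit.

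The main obstacle is the absence of (uniform) recurrence, which is exactly what the earlier reduction to periodic words sidestepped. For non-recurrent $\mathbf u$ the graphs $\Gamma_N$ need not be strongly connected but carry transient tails, palindromic factors may occur only finitely often, and the prefix itself may be a palindrome for infinitely many lengths; consequently the passage from palindromes \emph{in a prefix} ($\mathrm{Pal}_{N+1}$) to the \emph{global} palindromic complexity $\sum_k\mathcal{P}_{\mathbf u}(k)$ must be controlled through first- and last-occurrence positions rather than assumed. Making the mirror-pair/defect correspondence genuinely bijective—correctly handling even- versus odd-length central palindromes, the boundary constant, and the factors living on the transient part—is the delicate step; the two finiteness hypotheses are precisely what guarantee that only finitely many cells are irregular, so that the truncation at $n_0$ is lossless and the limit can be evaluated term by term.
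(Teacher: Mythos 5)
Your reduction is essentially correct and parallels the paper's setup: non-negativity of $T_{\uu}$ yields $n_0$ with $T_{\uu}(n)=0$ for all $n\geq n_0$, finite defect yields a prefix at which $D$ stabilizes, your telescoped identity $\sum_{n=0}^{N}T_{\uu}(n)=\C_{\uu}(N+1)-\P_{\uu}(N+1)+2(N+1)-2\sum_{k=0}^{N}\P_{\uu}(k)$ is correct, and so is your observation that once $N$ is past $n_0$ and past all defect positions, both sides of your ``balance'' grow by the common increment $2(\P_{\uu}(N+1)-1)$, so that anchoring the balance at a single large $N$ would finish the proof. The genuine gap is that you never prove the anchor, and the bijection you sketch for it pairs the wrong sets. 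You propose to attach ``to each defect position exactly one surviving mirror-pair, and conversely,'' but the number of mirror-pairs $\{w,\overline{w}\}$ of length $N+1$ is not governed by the number of defect positions: for the Fibonacci word there are no defect positions at all ($D(\uu)=0$), yet $\C_{\uu}(N+1)-\P_{\uu}(N+1)$ grows linearly in $N$. The correct correspondence --- the one your own balance equation encodes --- is between mirror-pairs of length $N+1$ and palindromic factors of $\uu$ of length at most $N$ that are \emph{absent from the prefix} of length $N$; the defect enters only indirectly, via $\mathrm{Pal}_{N+1}=N+2-D(\uu)$ for large $N$.

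Proving that correspondence is exactly the content of Lemma~\ref{lubka1} in the paper: one maps each absent palindrome $x$ to the pair $\{w,\overline{w}\}$ where $w$ is the window of length $N+1$ ending at the first occurrence of $x$; surjectivity uses that the prefix ending at the first occurrence of $w$ or $\overline{w}$ has unioccurrent longest palindromic suffix (this is where the stabilized defect $D(q)=D(\uu)$ is used), and injectivity in the case $w_y=\overline{w_z}$ comes from comparing first occurrences. This must be supplemented by the positional count of Lemma~\ref{lubka2}, showing that each position past the stabilizing prefix contributes exactly one new palindrome, namely the longest palindromic suffix at that position --- a point your sketch needs in order to pass from palindromes in a prefix to the global sums $\sum_k \P_{\uu}(k)$, and which you flag as delicate but do not resolve. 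Note also that your route would bypass the finite-word identity (Theorem~\ref{BRconj_finite}), whereas the paper anchors via that theorem applied to a prefix $p$ containing all factors of length $H$ and then shows $\sum_{n=H}^{|p|}T_p(n)=0$ using the two lemmas; bypassing it is legitimate in principle, but only shifts the entire burden onto the balance identity, whose proof is the heart of the theorem. As it stands, the proposal is a correct reduction plus a central lemma that is unproven and, as literally formulated, false.
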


\begin{thm}\label{finiteness}
If ${\mathbf u}$ is an infinite word with language closed under
reversal, then $$D({\mathbf u})<+\infty\qquad  \hbox{ if and only
if}\qquad \sum_{n=0}^{+\infty}T_{\mathbf u}(n)<+\infty\,.$$
\end{thm}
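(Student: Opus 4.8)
The plan is to use that both sides are non-negative and to reduce the statement to a local, combinatorial criterion. Since the language of $\uu$ is closed under reversal, the Allouche--Baake--Cassaigne--Damanik inequality $\mathcal P_{\uu}(n)+\mathcal P_{\uu}(n+1)\le \mathcal C_{\uu}(n+1)-\mathcal C_{\uu}(n)+2$ holds, so $T_{\uu}(n)\ge 0$ for every $n$. As each $T_{\uu}(n)$ is a non-negative integer, $\sum_{n}T_{\uu}(n)<+\infty$ is equivalent to $T_{\uu}(n)=0$ for all but finitely many $n$. I would therefore recast Theorem~\ref{finiteness} as the assertion that $D(\uu)<+\infty$ if and only if $T_{\uu}(n)=0$ for all large $n$.

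First I would give a local reading of $T_{\uu}(n)$ through one-sided and bilateral extensions. Writing $\mathcal C_{\uu}(n+1)-\mathcal C_{\uu}(n)=\sum_{|w|=n}(|\Rext(w)|-1)$ and, for palindromes, $\mathcal P_{\uu}(n+2)=\sum_{p}|\Pext(p)|$ with the sum over palindromes $p$ of length $n$ and $\Pext(p)=\{a:apa\in\Lu\}$, and using that closure under reversal makes $\Bext(p)$ symmetric for a palindrome $p$ and pairs every non-palindromic special factor with its mirror, I expect to rewrite $T_{\uu}(n)$ as a sum of non-negative contributions attached to the bispecial factors at the relevant level. In this language $T_{\uu}(n)\ge 0$ is the non-negativity of each contribution, and $T_{\uu}(n)=0$ is the assertion that every bispecial factor at that level is \emph{non-defective}: each palindromic special factor extends symmetrically on both sides (its diagonal bilateral extensions are as numerous as the structure allows) and no non-palindromic special factor forces an asymmetry in the bilateral-extension pattern.

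Next I would read $D(\uu)$ through the same objects. Using the description of the defect by lacunas, that is, prefix positions whose longest palindromic suffix fails to be unioccurrent, together with the fact that each lacuna is generated by a bispecial factor, I would show that a bispecial factor generates a lacuna exactly when it is defective in the sense isolated above. Both quantities are then governed by one and the same set of defective bispecial factors: $T_{\uu}(n)=0$ for all large $n$ holds if and only if only finitely many bispecial factors are defective, which in turn holds if and only if $D(\uu)<+\infty$. Concretely, $D(\uu)<+\infty$ forces the defective bispecial factors to have bounded length, whence $T_{\uu}(n)=0$ beyond that bound and $\sum_n T_{\uu}(n)<+\infty$; conversely, $T_{\uu}(n)=0$ for large $n$ leaves only finitely many defective bispecial factors and hence only finitely many lacunas, so $D(\uu)<+\infty$.

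The hard part will be carrying out this correspondence without any recurrence hypothesis. In~\cite{BaPeSta4} uniform recurrence allowed a reduction to a periodic word via return words, and that route is now closed. I must therefore establish directly, for arbitrary $\uu$, a \emph{length-localized} two-sided link between defect generation and the vanishing of $T_{\uu}$: that each lacuna is witnessed by a genuinely occurring bispecial factor even when $\uu$ is not recurrent and some palindromes appear only inside a prefix, and that a single defective bispecial factor affects $T_{\uu}(n)$ only for $n$ in a bounded window around its length, so that infinitely many defective bispecial factors force $T_{\uu}(n)>0$ for infinitely many $n$, and vice versa. Pinning down this window and excluding defect that is created ``at infinity'' without a bispecial witness is, I expect, the main obstacle, and it is precisely the point at which the periodic-word shortcut was previously needed.
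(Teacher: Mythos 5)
Your opening reduction is fine and matches the paper: since $T_{\uu}(n)\ge 0$ and integer-valued (Proposition~\ref{Balazi} --- note the inequality $\mathcal{P}_{\uu}(n)+\mathcal{P}_{\uu}(n+1)\le \mathcal{C}_{\uu}(n+1)-\mathcal{C}_{\uu}(n)+2$ is due to Bal\'a\v{z}i--Mas\'akov\'a--Pelantov\'a, not to Allouche et al.), finiteness of the sum is equivalent to $T_{\uu}(n)=0$ for all large $n$. Beyond that, however, your text is a programme rather than a proof, and it has two genuine gaps. First, the proposed localization of $T_{\uu}(n)$ at bispecial factors is not correct in spirit: by the equality case of the inequality (Lemma~\ref{graph}), $T_{\uu}(n)=0$ is \emph{not} the separate non-defectivity of each bispecial factor, but a global condition on the graph $G_n(\uu)$ whose vertices are the pairs $(w,\overline{w})$ of special factors of length $n$: after removing loops the graph must be a tree, and every loop must be a palindromic $n$-simple path. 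A cycle passing through several distinct special pairs forces $T_{\uu}(n)>0$ even when every individual bispecial factor has a perfectly symmetric bilateral extension pattern, so a decomposition of $T_{\uu}(n)$ into ``non-negative contributions attached to the bispecial factors,'' vanishing iff each factor is non-defective, cannot hold as stated; the acyclicity condition is irreducibly global at level $n$ and your outline never formulates it.

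Second, and decisively, the step you yourself flag as ``the hard part'' --- a length-localized, two-sided link between lacunas and the vanishing of $T_{\uu}$, valid without uniform recurrence --- is precisely the content of the theorem, and you supply no mechanism for it. The paper's route goes through a different intermediate object: return factors to the \emph{pair} $\{w,\overline{w}\}$. Proposition~\ref{charakterizace_T} shows, for aperiodic $\uu$, that $T_{\uu}(n)=0$ for all $n\ge N$ iff every factor beginning and ending in $w$ or $\overline{w}$, with no other occurrence of either, is a palindrome, for all $|w|\ge N$ (the forward direction rests on the tree structure of $G_n(\uu)$ and uniqueness of edges between distinct vertices); Lemma~\ref{lps_crw} then proves this property equivalent to the eventual unioccurrence of longest palindromic suffixes of prefixes, i.e.\ to the finite-defect criterion of Corollary~\ref{charakterizace_defekt}, via a minimal-counterexample/first-occurrence argument. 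Nothing in your sketch substitutes for these two steps, and your stated worry about lacunas ``without a bispecial witness'' in non-recurrent words is moot: closure under reversal already implies recurrence (as recalled in the preliminaries); what is unavailable is only \emph{uniform} recurrence. Finally, the paper must treat periodic words separately (citing Brlek--Reutenauer), since Proposition~\ref{charakterizace_T} is stated for aperiodic words only --- a case split your plan omits entirely.
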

In the paper~\cite{BaPeSta4} which is  devoted mainly to the
uniformly recurrent words, we already stated in the section
Open problems one part of Theorem \ref{finiteness}, namely that
$D({\mathbf u})<+\infty$ implies $\sum_{n=0}^{+\infty}T_{\mathbf
u}(n)<+\infty\,.$ As pointed out in  \cite{Basic12}, there is a
gap in our proof, and its corrected version   can be found
in~\cite{BaPeSta4_Corrigendum}. In order to make the present paper
self-sustained so that the reader understand and check all steps
of the proof without having all previous papers at hand, we recall
necessary notations and statements together with the proofs of the
essential ones.

\section{Preliminaries}
By $\mathcal{A}$ we denote a~finite set of symbols called
{\em letters}; the set $\mathcal{A}$ is therefore called an {\em
alphabet}. A finite string $w=w_0w_1\ldots w_{n-1}$ of letters from
$\mathcal{A}$ is said to be a~{\em finite word}, its length is
denoted by $|w| = n$. Finite words over $\mathcal{A}$ together
with the operation of concatenation and the empty word $\epsilon$
as the neutral element form a~free monoid $\mathcal{A}^*$. The map
$$w=w_0w_1\ldots w_{n-1} \quad \mapsto \quad \overline{w} =
w_{n-1}w_{n-2}\ldots w_{0}$$ is a bijection on $\mathcal{A}^*$,
the word $\overline{w}$ is called the {\em reversal} or the {\em
mirror image} of $w$. A~word $w$ which coincides with its mirror
image is a~{\em palindrome}.

Under an {\em infinite word} we understand an infinite string
${\mathbf u}=u_0u_1u_2\ldots $ of letters from $\mathcal{A}$.
A~finite word $w$ is a~{\em factor} of a~word $v$ (finite or
infinite) if there exist words $p$ and $s$ such that $v= pws$. If
$p = \epsilon$, then $w$ is said to be a~{\em prefix} of $v$, if
$s = \epsilon$, then $w$ is a~{\em suffix} of~$v$.

The {\em language} $\mathcal{L}(v)$ of a~finite or an infinite word
$v$ is the set of all its factors. Factors of $v$ of length $n$
form the set denoted by $\mathcal{L}_n(v)$. We say that the language of an infinite word ${\mathbf u}$ is {\em closed under reversal} if
$\mathcal{L}({\mathbf u})$ contains with every factor $w$ also its
reversal $\overline{w}$.

For any factor $w\in \mathcal{L}({\mathbf u})$, there exists an
index $i$ such that $w$ is a prefix of the infinite word
$u_iu_{i+1}u_{i+2} \ldots$. Such an index is called an {\em
occurrence} of $w$ in ${\mathbf u}$. If each factor of $\mathbf u$
has infinitely many occurrences in ${\mathbf u}$, the infinite
word $\mathbf u$ is said to be {\em recurrent}. It is easy to see
that if the language of ${\mathbf u}$ is closed under reversal,
then ${\mathbf u}$ is recurrent (a~proof can be found
in~\cite{GlJuWiZa}). For a~recurrent infinite word ${\mathbf u}$,
we may define the notion of a~{\em complete return word} of any $w
\in\mathcal{L}({\mathbf u})$. It is a~factor $v\in
\mathcal{L}({\mathbf u})$ such that $w$ is a prefix and a suffix
of $v$ and $w$ occurs in $v$ exactly twice.

If any factor $w \in \mathcal{L}({\mathbf u})$ has
only finitely many complete return words, then the infinite word ${\mathbf
u}$ is called {\em uniformly recurrent}.

The {\em factor complexity} of an infinite word ${\mathbf u}$ is
the~map $\mathcal{C}_{\bf u}: \mathbb{N} \mapsto \mathbb{N}$ defined
by the prescription $\mathcal{C}_{\bf u}(n):=\#
\mathcal{L}_n({\mathbf u})$.
To determine the first difference of
the factor complexity, one has to count the possible {extensions}
of factors of length $n$. A~{\em right extension} of $w \in
\mathcal{L}({\mathbf u})$ is a letter $a\in \mathcal{A}$ such
that $w a\in \mathcal{L}({\mathbf u})$. Of course, any factor of
${\mathbf u}$ has at least one right extension. A~factor $w$ is
called {\em right special} if $w$ has at least two right
extensions. Similarly, one can define a~{\em left extension} and
a~{\em left special} factor. We will deal mainly with recurrent
infinite words ${\mathbf u}$. In such a~case, any factor of $\mathbf
u$ has at least one left extension.

In~\cite{DrJuPi} it is shown that any finite word $w$ contains at most $|w|+1$ distinct palindromes (including the empty word).
The {{\em defect} $D(w)$ of a~finite word $w$ is the difference
between the utmost number of palindromes $|w|+1$ and the actual
number of palindromes contained in $w$.

In accordance with the terminology introduced in \cite{DrJuPi},
the factor with a~unique occurrence in another factor is called
{\em unioccurrent}.

The following corollary gives an insight into the birth of defects.
\begin{coro}[\cite{DrJuPi}]\label{prodlouzeni} The defect $D(w)$ of a~finite word $w$ is equal to the
number of prefixes $w'$ of $w$ for which the longest palindromic
suffix of $w'$ is not unioccurrent in $w'$. In other words, if $b$
is a~letter and $w$ a~finite word, then $D(wb)=D(w)+\delta$, where
$\delta = 0$ if the longest palindromic suffix of $wb$ occurs
exactly once in $wb$ and $\delta = 1$ otherwise.
\end{coro}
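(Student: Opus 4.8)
The plan is to prove the ``in other words'' recurrence first --- that $D(wb)=D(w)+\delta$ with $\delta\in\{0,1\}$ governed by the longest palindromic suffix of $wb$ --- and then to obtain the counting formula by a straightforward induction on $|w|$. Throughout I would write $P(w)$ for the number of distinct palindromic factors of $w$ (counting the empty word $\epsilon$), so that by the quoted bound of~\cite{DrJuPi} one has $D(w)=|w|+1-P(w)$ at every stage.

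The heart of the argument is the first step: to show that passing from $w$ to $wb$ creates at most one new palindrome, and that such a palindrome, if it exists, is necessarily the longest palindromic suffix of $wb$. I would first observe that any factor of $wb$ which is not already a factor of $w$ must use the final letter $b$, hence is a suffix of $wb$; in particular every new palindrome is a palindromic suffix of $wb$. To see that only the longest such suffix can be new, let $p$ be the longest palindromic suffix of $wb$ and let $q$ be any shorter palindromic suffix. Then $q$ is a suffix of $p$, and since $p$ is a palindrome, $\overline{q}=q$ is also a prefix of $p$; because $|q|<|p|$, this prefix occurrence of $q$ ends at a position at most $|w|-1$ and so lies entirely inside $w$, whence $q$ is not new. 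Thus the only candidate for a new palindrome is $p$.

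The second step determines exactly when $p$ is new. The suffix occurrence of $p$ ends at the last position of $wb$, while any other occurrence of $p$ must end strictly earlier, i.e.\ inside $w$. Hence $p$ occurs more than once in $wb$ precisely when it already occurs in $w$, that is, precisely when $p$ is not new; equivalently, $p$ is new if and only if $p$ is unioccurrent in $wb$. Combining with the first step, the number of new palindromes is $\nu=1$ if the longest palindromic suffix of $wb$ is unioccurrent and $\nu=0$ otherwise. Writing $P(wb)=P(w)+\nu$ and using $|wb|+1=(|w|+1)+1$, I obtain $D(wb)=D(w)+(1-\nu)$, which is exactly the claimed recurrence with $\delta=1-\nu$.

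Finally, the counting formula follows by telescoping the recurrence over all prefixes $\epsilon=p_0,p_1,\ldots,p_{|w|}=w$ of $w$: since $D(\epsilon)=0$ and each step contributes $\delta_k\in\{0,1\}$, the total $D(w)$ equals the number of indices $k$ for which the longest palindromic suffix of $p_k$ is not unioccurrent in $p_k$, i.e.\ the number of prefixes $w'$ of $w$ with that property (the empty prefix never contributes, its longest palindromic suffix $\epsilon$ being unioccurrent). The main obstacle is the symmetry argument in the first step: the observation that a palindromic suffix is also a prefix of the longest palindromic suffix, together with the overlap bookkeeping showing that this prefix occurrence genuinely sits within $w$. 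Once that is established, everything else is routine.
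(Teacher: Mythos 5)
Your proof is correct, and since the paper itself gives no proof of this corollary (it is quoted directly from~\cite{DrJuPi}), your argument is exactly the classical Droubay--Justin--Pirillo one: each letter appended creates at most one new palindrome, which can only be the longest palindromic suffix of $wb$ and is new precisely when it is unioccurrent, after which the counting formula follows by telescoping from $D(w)=|w|+1-P(w)$. Both the key symmetry step (a shorter palindromic suffix $q$ of $p$ is also a prefix of $p$, and that prefix occurrence ends inside $w$) and the unioccurrence criterion are handled correctly, so there is nothing to fix.
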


Corollary~\ref{prodlouzeni} implies that $D(v) \geq D(w)$ whenever $w$ is
a~factor of $v$. It enables to give a~reasonable definition of the
defect of an infinite word (see~\cite{BrHaNiRe}).

\begin{defi}\label{defekt}  The defect of  an  infinite word  $\mathbf{u}$ is the  number (finite or infinite)
$$D({\mathbf{u}}) = \sup \{ D(w) \colon w \ \text{is a~prefix of $\mathbf u$}\}\,.$$
\end{defi}
Let us point out two facts.
\begin{enumerate}
\item If we consider all factors of a~finite or an infinite word
$\mathbf u$, we obtain the same defect, i.e.,
$$D({\mathbf{u}}) = \sup \{ D(w) \colon w \in \mathcal{L}(\mathbf{u})\} \,.$$
\item Any infinite word with finite defect contains infinitely
many palindromes.
\end{enumerate}

Using Corollary~\ref{prodlouzeni} and Definition~\ref{defekt}, we obtain immediately the following corollary.
\begin{coro}\label{charakterizace_defekt}
Let $\mathbf u$ be an infinite word with language closed under
reversal. The
following statements are equivalent.
\begin{enumerate}
 \item The defect of ${ \mathbf u}$ is finite.
\item There exists an integer $H$ such that the longest
palindromic suffix of any prefix $w$ of length $|w| \geq H$
occurs in $w$ exactly once.
\end{enumerate}
\end{coro}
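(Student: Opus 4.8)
The plan is to derive both implications directly from Corollary~\ref{prodlouzeni}, which recasts the defect of a finite word as a count of ``defective'' prefixes, together with the observation that along a single infinite word the defects of the growing prefixes form a non-decreasing sequence of integers.

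First I would prove that (2) implies (1). Assume the threshold $H$ exists. By Corollary~\ref{prodlouzeni}, for any prefix $w$ of $\mathbf{u}$ the value $D(w)$ equals the number of prefixes $w'$ of $w$ whose longest palindromic suffix is not unioccurrent in $w'$. Under assumption (2), every such defective prefix $w'$ must satisfy $|w'| < H$, since for $|w'| \ge H$ its longest palindromic suffix occurs exactly once. As there are only $H$ prefixes of length strictly less than $H$, we obtain $D(w) \le H$ for every prefix $w$ of $\mathbf{u}$. Taking the supremum over all prefixes and using Definition~\ref{defekt} yields $D(\mathbf{u}) \le H < +\infty$.

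For the converse, that (1) implies (2), I would use the incremental form of Corollary~\ref{prodlouzeni}. Writing $w_n = u_0u_1\cdots u_{n-1}$ for the prefix of length $n$, we have $D(w_{n+1}) = D(w_n) + \delta_n$ with $\delta_n \in \{0,1\}$, so the sequence $\bigl(D(w_n)\bigr)_{n\ge 0}$ is non-decreasing and bounded above by $D(\mathbf{u}) < +\infty$. A non-decreasing bounded sequence of integers is eventually constant, so there is an index $N$ with $D(w_n) = D(\mathbf{u})$ for all $n \ge N$. Setting $H = N+1$, any prefix $w$ with $|w| \ge H$ can be written as $w = w'b$ with $|w'| \ge N$; then $D(w) = D(w') = D(\mathbf{u})$ forces $\delta = 0$, which by Corollary~\ref{prodlouzeni} means precisely that the longest palindromic suffix of $w$ occurs exactly once in $w$. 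This is statement (2).

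The argument is essentially routine once the two faces of Corollary~\ref{prodlouzeni}, the counting formula and the letter-by-letter increment, are in hand. The only step requiring a little care will be the stabilization in the converse direction: recognizing that finiteness of the supremum $D(\mathbf{u})$ forces the non-decreasing integer sequence $\bigl(D(w_n)\bigr)$ to be eventually constant is exactly what converts ``finite defect'' into the uniform threshold $H$. I note in passing that this equivalence does not actually require the language to be closed under reversal, so that hypothesis plays no role in the proof.
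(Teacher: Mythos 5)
Your proof is correct and follows exactly the route the paper intends: the paper states this corollary as an immediate consequence of Corollary~\ref{prodlouzeni} and Definition~\ref{defekt}, and your two directions (bounding $D(w)$ by counting defective prefixes below the threshold $H$, and stabilization of the non-decreasing bounded integer sequence $D(w_n)$ for the converse) are precisely the routine verification the paper leaves implicit. Your closing observation that closure under reversal is never used in this equivalence is also accurate; the hypothesis merely fixes the setting in which the corollary is later applied.
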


For \emph{the longest palindromic suffix} of a~word $w$ we will sometimes use the notation $lps(w)$.

\medskip

The number of  palindromes of a~fixed length occurring in an
infinite word is measured by the so called {\it palindromic
complexity} ${\mathcal{P}_{\bf u}}$, the~map which assigns to any
non-negative integer $n$ the number
$$ {\mathcal{P}_{\bf u}}(n) := \#\{ w \in {\mathcal{L}_n}(u) \colon w \ \
\hbox{is a palindrome}\}\,.$$

Denote by
$$
T_{\mathbf u}(n) = \mathcal{C}_{\mathbf
u}(n+1)-\mathcal{C}_{\mathbf u}(n)  +2 - \mathcal{P}_{\mathbf
u}(n+1) - \mathcal{P}_{\mathbf u}(n).
$$
The following proposition is proven in \cite{BaMaPe} for uniformly
recurrent words, however, as also noted in \cite{BrRe-conjecture}, the uniform recurrence is not needed in
the proof and it holds for any infinite word with language
closed under reversal.

\begin{prop}[\cite{BaMaPe}]\label{Balazi}
 If $\mathbf u$ is an infinite word
with language closed under reversal, then
\begin{equation}\label{BalaziNerovnost}
T_{\mathbf u}(n)\geq 0 \quad \text{for all $n\in \mathbb{N}$.}
\end{equation}

\end{prop}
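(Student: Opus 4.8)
The plan is to encode the combinatorics of $\mathcal{L}_n(\uu)$ and $\mathcal{L}_{n+1}(\uu)$ in the \emph{Rauzy graph} $G_n$: its vertices are the factors of length $n$, and each factor $e$ of length $n+1$ determines a directed edge from the prefix of $e$ of length $n$ to the suffix of $e$ of length $n$. Thus $G_n$ has $\mathcal{C}_{\uu}(n)$ vertices and $\mathcal{C}_{\uu}(n+1)$ edges, and since a word with language closed under reversal is recurrent, $G_n$ is connected. Closure under reversal makes the map $\sigma$ sending each factor to its reversal an involutive automorphism of the underlying undirected graph that reverses the orientation of every edge. The vertices fixed by $\sigma$ are exactly the palindromes of length $n$, so there are $\mathcal{P}_{\uu}(n)$ of them; the edges fixed by $\sigma$ are exactly the palindromes of length $n+1$, so there are $\mathcal{P}_{\uu}(n+1)$ of them.

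Writing the vertex count as $a+2b$ (with $a=\mathcal{P}_{\uu}(n)$ fixed vertices and $b$ orbits of size two) and the edge count as $c+2d$ (with $c=\mathcal{P}_{\uu}(n+1)$ fixed edges and $d$ orbits of size two), a direct substitution into the definition of $T_{\uu}(n)$ gives $T_{\uu}(n)=2\bigl(d-a-b+1\bigr)$. Hence $T_{\uu}(n)\ge 0$ is equivalent to $d\ge a+b-1$, which I would obtain by controlling the Euler characteristic of the quotient graph $\overline{G}_n=G_n/\sigma$; concretely I would show $d-a-b+1=\beta_1(\overline{G}_n)\ge 0$, so that $T_{\uu}(n)$ is literally twice the cycle rank of the quotient.

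The delicate point — and the step I expect to be the main obstacle — is that $\sigma$ reverses orientations, so a fixed edge is folded about its midpoint rather than mapped to itself identically; treating such an edge as an ordinary self-mapped cell produces an error of exactly $\mathcal{P}_{\uu}(n+1)$ and collapses the argument. To handle it correctly I would first subdivide every $\sigma$-fixed edge at its (fixed) midpoint, turning $G_n$ into a complex $G_n'$ on which $\sigma$ acts with fixed points only among the vertices; subdivision does not change the Euler characteristic, and now each former fixed edge contributes a swapped pair of edges together with one new fixed midpoint vertex. For this action, which is free on edges, the equivariant identity $\chi(G_n')=2\chi(\overline{G}_n)-(\text{number of fixed vertices of } G_n')$ holds, and a short count identifies $\chi(\overline{G}_n)$ with $a+b-d$.

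Finally, since $\overline{G}_n$ is connected, being the continuous image of the connected graph $G_n$, its Euler characteristic satisfies $\chi(\overline{G}_n)\le 1$, i.e. $a+b-d\le 1$, which is precisely $d\ge a+b-1$. This yields $T_{\uu}(n)=2\beta_1(\overline{G}_n)\ge 0$ and completes the proof. The only inputs are the recurrence of words with language closed under reversal (so that $G_n$ is connected) and the two bijections identifying the $\sigma$-fixed vertices and edges with the palindromes of lengths $n$ and $n+1$; the rest is elementary graph topology.
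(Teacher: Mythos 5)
Your proof is correct. I checked the key counts: with $a=\mathcal{P}_{\uu}(n)$ fixed vertices, $b$ swapped vertex pairs, $c=\mathcal{P}_{\uu}(n+1)$ fixed edges and $d$ swapped edge pairs, one indeed gets $T_{\uu}(n)=(c+2d)-(a+2b)+2-c-a=2(d-a-b+1)$; after subdividing each folded palindromic edge the quotient has $a+b+c$ vertices and $c+d$ edges, so $\chi(\overline{G}_n)=a+b-d$, and your equivariant identity $\chi(G_n')=2\chi(\overline{G}_n)-(a+c)$ checks out; connectedness of the Rauzy graph follows from recurrence, which closure under reversal guarantees. However, your route is not the one behind the paper's statement. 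The paper does not reprove the proposition but cites \cite{BaMaPe}, and the argument it relies on (its footprint is Lemma~\ref{graph}) lives on the \emph{reduced} Rauzy graph: vertices are the pairs $(w,\overline{w})$ with $w$ special, edges are $n$-simple paths modulo reversal, and the palindromes of lengths $n$ and $n+1$ are accounted for as the centres (central vertex, resp.\ central edge) of palindromic simple paths, which appear as palindromic loops in the quotient; nonnegativity then again comes from a connectedness/tree bound. Your unreduced version shares the germ (reversal involution plus Euler characteristic of the quotient) but trades the simple-path combinatorics for the subdivision trick and the equivariant count, which buys a self-contained argument and the crisp identity $T_{\uu}(n)=2\beta_1(\Gamma_n/\sigma)$ on the raw Rauzy graph $\Gamma_n$. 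What the reduced version buys instead is exactly the refinement the paper needs later: Lemma~\ref{graph}'s explicit characterization of $T_{\uu}(n)=0$ (quotient minus loops is a tree, all loops palindromic), which feeds into Proposition~\ref{charakterizace_T} --- though note that your identity yields an equivalent criterion, namely that the subdivided quotient be a tree.
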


Let ${\mathbf u}$  be an infinite word with language closed under
reversal. Using the proof of Proposition~\ref{Balazi}, those $n \in
\mathbb N$ for which $T_{\mathbf u}(n)=0$ can be characterized in
the graph language.
Before doing that we need to introduce some more notions.

An {\em $n$-simple path} $e$ is a~factor of ${\mathbf u}$ of
length at least $n + 1$ such that the only special (right or left)
factors of length $n$ occurring in $e$ are its prefix and suffix
of length $n$. If $w$ is the prefix of $e$ of length $n$ and $v$
is the suffix of $e$ of length $n$, we say that the $n$-simple
path $e$ starts in $w$ and ends in $v$. We will denote by
$G_n({\mathbf u})$ an undirected graph whose set of vertices is
formed by unordered pairs $(w,\overline{w})$ such that $w \in
\mathcal{L}_n({\mathbf u})$ is right or left special. We connect
two vertices $(w,\overline{w})$ and $(v,\overline{v})$ by an
unordered pair $(e,\overline{e})$ if $e$ or $\overline{e}$ is an
$n$-simple path starting in $w$ or $\overline{w}$ and ending in
$v$ or $\overline{v}$. Note that the graph $G_n({\mathbf u})$ may
have multiple edges and loops.

\begin{lem}\label{graph}
If ${\mathbf u}$  is an infinite  word with language
closed under reversal and $n\in \mathbb{N}$, then $T_{\mathbf u}(n)
= 0$ if and only if both of the following conditions are
met.
\begin{enumerate}\item The graph obtained from $G_n({\mathbf u})$ by
removing loops is a tree. \item Any $n$-simple path forming a loop
in the graph $G_n({\mathbf u})$  is a palindrome.
\end{enumerate}
\end{lem}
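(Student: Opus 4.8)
The plan is to translate both sides of the defining identity for $T_{\mathbf u}(n)$ into the combinatorics of the folded reduced Rauzy graph $G_n(\mathbf u)$ and then to read off $T_{\mathbf u}(n)=0$ as a degeneracy (tree plus palindromicity) of that graph. Throughout I use that $\mathbf u$ is recurrent, so its Rauzy graph is connected, and that reversal $w\mapsto\overline w$ is an involution $\iota$ which exchanges left and right special factors and whose fixed points are exactly the palindromes.

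\emph{First step: counting via the reduced Rauzy graph.} Suppressing every factor of length $n$ with a unique left and a unique right extension leaves a directed graph whose vertices are the special factors of length $n$ and whose edges are the $n$-simple paths; since each suppression deletes one vertex and one edge, one gets $\mathcal{C}_{\mathbf u}(n+1)-\mathcal{C}_{\mathbf u}(n)=E_s-V_s$, where $V_s$ and $E_s$ count the special factors of length $n$ and the $n$-simple paths. Folding by $\iota$, whose orbits have size $1$ on palindromes and $2$ otherwise, yields $|V(G_n(\mathbf u))|=(V_s+p)/2$ and $|E(G_n(\mathbf u))|=(E_s+q)/2$, where $p$ is the number of palindromic special factors of length $n$ and $q$ the number of palindromic $n$-simple paths.

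\emph{Second step, the heart of the matter:} prove the palindrome count $\mathcal{P}_{\mathbf u}(n)+\mathcal{P}_{\mathbf u}(n+1)=p+q$. A palindrome of length $n$ is either right special, hence (being $\iota$-fixed) also left special, and contributes to $p$; or it has a unique left and right extension. For such a non-special palindrome $w$, closure under reversal forces its unique right extension letter to equal its unique left one, so $w$ extends symmetrically to the palindrome $awa$; iterating, $w$ sits in the interior of a unique $n$-simple path which is itself a palindrome and of which $w$ is the center. The same symmetric-extension argument applied to a palindrome of length $n+1$ places it at the center of a palindromic $n$-simple path, and a parity count shows conversely that each palindromic $n$-simple path carries exactly one central cell (a palindrome of length $n$ or $n+1$) while a non-palindromic simple path carries none. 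This gives the bijection and hence the identity. Substituting the two identities and cancelling produces
\[
T_{\mathbf u}(n)=2\bigl(|E(G_n(\mathbf u))|-|V(G_n(\mathbf u))|-q+1\bigr).
\]

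\emph{Final step: interpret the right-hand side.} A palindromic $n$-simple path starts in $w$ and ends in $\overline w$, so in $G_n(\mathbf u)$ it is a loop; thus $q$ counts precisely the palindromic loops, and if $L$ is the total number of loops then $L-q\ge 0$ with equality exactly when every loop is a palindrome, which is condition (2). Writing $\tilde G_n$ for $G_n(\mathbf u)$ with its loops removed, still connected, I split
\[
|E(G_n(\mathbf u))|-|V(G_n(\mathbf u))|-q+1=\bigl(|E(\tilde G_n)|-|V(\tilde G_n)|+1\bigr)+(L-q),
\]
where the first bracket is the cyclomatic number of the connected graph $\tilde G_n$, nonnegative and zero exactly when $\tilde G_n$ is a tree, which is condition (1). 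Both summands being nonnegative reproves $T_{\mathbf u}(n)\ge 0$ and shows $T_{\mathbf u}(n)=0$ iff both vanish, that is, iff (1) and (2) hold. I expect the second step to be the main obstacle: making the symmetric-extension argument and the parity bookkeeping watertight, so that the central palindromes of simple paths biject with the non-special palindromes of length $n$ together with all palindromes of length $n+1$; one must also dispose of the degenerate periodic case, where for large $n$ there are no special factors and $G_n(\mathbf u)$ is empty, so the statement has to be checked directly.
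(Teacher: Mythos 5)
Your proposal is correct and takes essentially the same route as the paper, whose entire proof of this lemma is a citation to the proof of Theorem 1.2 in \cite{BaMaPe} (recalled as Proposition~\ref{Balazi}): that argument proceeds exactly as yours does, via the reduced Rauzy graph, folding by the reversal involution, the equality $\mathcal{P}_{\mathbf u}(n)+\mathcal{P}_{\mathbf u}(n+1)=p+q$ obtained by placing each non-special palindrome of length $n$ and each palindrome of length $n+1$ at the center of a unique palindromic $n$-simple path, and the resulting identity $T_{\mathbf u}(n)=2\bigl(|E|-|V|+1-q\bigr)$ split into the cyclomatic number of the loop-free graph plus the count of non-palindromic loops. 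Your explicit flagging of the degenerate case where $G_n({\mathbf u})$ is empty (periodic words, large $n$) is a legitimate caveat the paper glosses over, but it is harmless for the lemma's use here, since Proposition~\ref{charakterizace_T} applies it only to aperiodic words.
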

\begin{proof} It is a~direct consequence of the proof of Theorem 1.2 in~\cite{BaMaPe}
(recalled in this paper as Proposition~\ref{Balazi}).
\end{proof}

\section{Proof of Theorem \ref{konecne}}

The aim of this section is to prove Theorem~\ref{konecne},
i.e., to prove the Brlek-Reutenauer conjecture under the
additional assumption that the defect $D(\uu)$ of an infinite
word $\uu$ and the sum $\sum_{n=0}^{\infty}T_{\uu}(n)$ are
finite. As observed in~\cite{BrRe-conjecture}, it is easy to prove
the ``finite analogy'' of the conjecture, which deals only with
finite words. We will also make use of  this result.
\begin{thm}[\cite{BrRe-conjecture}]\label{BRconj_finite}
For every finite word $w$ we have
$$2D(w)=\sum_{n=0}^{|w|}T_w(n),$$ where $T_w(n)=
\mathcal{C}_{w}(n+1)-\mathcal{C}_{w}(n)  +2 - \mathcal{P}_{w}(n+1)
- \mathcal{P}_{w}(n)$ and the index $w$ means that we consider
only factors of $w$.
\end{thm}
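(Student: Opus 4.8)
The plan is to prove the identity by a direct telescoping computation, using only that $w$ has finite length $m:=|w|$ and that every factor of $w$---in particular every palindromic factor---has length between $0$ and $m$. First I would split the sum according to the definition of $T_w$ into the factor-complexity part, the constant part, and the palindromic part:
\[
\sum_{n=0}^{m} T_w(n) = \sum_{n=0}^{m}\bigl(\mathcal{C}_w(n+1)-\mathcal{C}_w(n)\bigr) + \sum_{n=0}^{m} 2 \;-\; \sum_{n=0}^{m}\bigl(\mathcal{P}_w(n+1)+\mathcal{P}_w(n)\bigr).
\]
The first sum telescopes to $\mathcal{C}_w(m+1)-\mathcal{C}_w(0)$; since the only factor of length $0$ is the empty word and there are no factors of length exceeding $m$, the boundary values are $\mathcal{C}_w(0)=1$ and $\mathcal{C}_w(m+1)=0$, so this sum equals $-1$. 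The middle sum is plainly $2(m+1)$.

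For the palindromic part I would introduce the total number $P(w)$ of distinct palindromic factors of $w$, the empty word included. Because every palindrome of $w$ has length at most $m$, we have $\sum_{n=0}^{m}\mathcal{P}_w(n)=P(w)$. Shifting the index in the remaining sum and using $\mathcal{P}_w(0)=1$ (the empty word) together with $\mathcal{P}_w(m+1)=0$ gives $\sum_{n=0}^{m}\mathcal{P}_w(n+1)=\sum_{k=1}^{m+1}\mathcal{P}_w(k)=P(w)-1$. Hence the palindromic part equals $-\bigl((P(w)-1)+P(w)\bigr)=-2P(w)+1$.

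Combining the three contributions yields
\[
\sum_{n=0}^{m} T_w(n) = -1 + 2(m+1) + \bigl(-2P(w)+1\bigr) = 2\bigl((m+1)-P(w)\bigr).
\]
Finally I would invoke the definition of the defect of a finite word, namely $D(w)=(|w|+1)-P(w)$, the difference between the maximal possible number $|w|+1$ of palindromes and the actual number $P(w)$ contained in $w$; this identifies the right-hand side as $2D(w)$ and completes the proof. I do not expect a genuine obstacle: the argument is elementary bookkeeping. The only points demanding a little care are the boundary terms $\mathcal{C}_w(m+1)=\mathcal{P}_w(m+1)=0$, the index shift $n\mapsto n+1$ in the palindromic sum, and the observation that summing $\mathcal{P}_w$ over all admissible lengths recovers precisely the palindrome count appearing in the definition of the defect.
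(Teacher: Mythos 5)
Your computation is correct and complete: the telescoping of the factor-complexity differences, the boundary values $\mathcal{C}_w(0)=\mathcal{P}_w(0)=1$ and $\mathcal{C}_w(|w|+1)=\mathcal{P}_w(|w|+1)=0$, and the identity $\sum_{n=0}^{|w|}\mathcal{P}_w(n)=P(w)$ combine to give exactly $2\bigl((|w|+1)-P(w)\bigr)=2D(w)$ under the paper's definition of the defect of a finite word. The paper states this theorem without proof, citing \cite{BrRe-conjecture}, and your telescoping bookkeeping is precisely the elementary argument behind that cited result, so your proposal matches the intended proof.
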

It may seem that the Brlek-Reutenauer conjecture for an infinite word
$\uu$ can be obtained from Theorem \ref{BRconj_finite} by a
``limit transition''.
However, this transition would be far from being kosher.  The following
lemmas enable us to avoid the incorrectness.

\medskip

\begin{lemma}\label{lubka1}
Let ${\mathbf u}$ be an infinite word with language closed under reversal and finite defect.
If $q$ is its
prefix satisfying $D({\mathbf u})= D(q)$, then for $H=|q|
+1$ one has $$\C_{\uu}(H)-\P_{\uu}(H)=2\#\{x \in {\mathcal{L}}(\uu)
\colon x \ \text{is a palindrome shorter than $H$ which is not contained
in $q$}\}.$$

\end{lemma}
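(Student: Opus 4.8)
The plan is to prove the identity by a direct bijection, reading the left-hand side as a count of reversal-pairs of factors and the right-hand side as a count of ``new'' palindromes. Since $\mathcal{L}(\uu)$ is closed under reversal, the non-palindromic factors of length $H$ split into unordered pairs $\{w,\overline w\}$ with $w\neq\overline w$, while the palindromic factors of length $H$ are counted by $\P_{\uu}(H)$; hence $\C_{\uu}(H)-\P_{\uu}(H)=2p$, where $p$ is the number of such pairs. Thus it suffices to exhibit a bijection between these $p$ pairs and the palindromes $x\in\mathcal{L}(\uu)$ with $|x|<H$ that are not factors of $q$.

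First I would extract the structural consequence of the hypothesis $D(\uu)=D(q)$. By Corollary~\ref{prodlouzeni} and Corollary~\ref{charakterizace_defekt}, every prefix $w$ of $\uu$ with $|w|\geq H$ has a unioccurrent longest palindromic suffix $lps(w)$. I would then record the elementary fact that every palindrome $x\in\mathcal{L}(\uu)$ equals $lps(u_0\cdots u_i)$, where $i$ is the end of the \emph{first} occurrence of $x$ (if a strictly longer palindromic suffix existed at that position, then, being also a prefix of that longer palindrome, $x$ would have an earlier occurrence, a contradiction). Combining the two facts, a palindrome $x$ fails to be a factor of $q$ precisely when its first occurrence ends at some index $i_x\geq H-1$, and then $x=lps(u_0\cdots u_{i_x})$ is created ``defect-free''. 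For such an $x$ with $|x|<H$, I define $\Phi(x)$ to be the length-$H$ window $u_{i_x-H+1}\cdots u_{i_x}$. One checks that $lps(\Phi(x))=x$; since $|x|<H$, the factor $\Phi(x)$ is non-palindromic, and $\Phi$ is injective because $x$ is recovered as $lps(\Phi(x))$.

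It remains to see that the composite map sending $x$ to the pair $\{\Phi(x),\overline{\Phi(x)}\}$ is a bijection onto the $p$ reversal-pairs; this is the crux. Using that the longest palindromic prefix of $w$ equals $lps(\overline w)$, the content is that for each pair $\{w,\overline w\}$ exactly one of the two factors is the first-occurrence window of a new palindrome --- intuitively, one endpoint carries a genuinely new palindromic suffix while the mirror endpoint merely repeats an already seen palindrome. Establishing this ``exactly one per pair'' statement, namely both surjectivity onto pairs and the exclusion of $w$ and $\overline w$ simultaneously lying in the image of $\Phi$, is the main obstacle; it is where the unioccurrence property of the first step must be applied carefully to control first occurrences on both sides. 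Once it is in place, $p$ equals the number of palindromes of length $<H$ not contained in $q$, giving $\C_{\uu}(H)-\P_{\uu}(H)=2p$ as claimed. As a consistency check, telescoping $T_{\uu}$ together with Theorem~\ref{BRconj_finite} applied to the finite word $q$ shows that the asserted identity is equivalent to $\sum_{n=0}^{H-1}T_{\uu}(n)=2D(\uu)$, which reassuringly matches the global conjecture restricted to the range determined by the defect-realizing prefix.
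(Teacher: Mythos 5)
Your bijection is exactly the one the paper uses (your $\Phi$ is the paper's map $f$), and the parts you actually prove are correct: the window $\Phi(x)$ is well defined since the first occurrence of a palindrome $x\notin\mathcal{L}(q)$ ends at an index $\geq H-1$; $\Phi(x)$ is non-palindromic (a palindromic window would place $x=\overline{x}$ as its prefix, i.e., earlier); and $x=lps(\Phi(x))$, giving injectivity at the level of words. But the proof is genuinely incomplete: you explicitly defer the two statements that carry the entire content of the lemma, namely (a) surjectivity --- every pair $\{w,\overline{w}\}$ with $w\in\mathcal{L}_H(\uu)$, $w\neq\overline{w}$, is the first-occurrence window of some palindrome $x$ with $|x|<H$ and $x\notin\mathcal{L}(q)$ --- and (b) the exclusion that $w$ and $\overline{w}$ cannot both lie in the image of $\Phi$. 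Labeling these ``the crux'' and ``the main obstacle'' without an argument is a gap, not a proof. The tell is that everything you did establish holds for \emph{any} infinite word with language closed under reversal, while the hypothesis $D(\uu)=D(q)$ --- without which the identity is false, since a word of growing defect has more reversal-pairs than new palindromes --- is never used except in your preliminary remark; it must enter exactly in step (a).

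Both missing steps are short, and it is worth seeing how the hypothesis closes them. For (a): let $p$ be the prefix of $\uu$ ending in the \emph{first} occurrence of $w$ or $\overline{w}$. Since $|p|\geq H>|q|$, Corollary~\ref{prodlouzeni} gives $D(p)=D(q)=D(\uu)$, so $lps(p)$ is unioccurrent in $p$; hence $lps(p)\notin\mathcal{L}(q)$ (an occurrence inside $q$ would be a second occurrence in $p$), and $|lps(p)|<H$, because a palindromic suffix of $p$ of length $\geq H$ would contain $w$ or $\overline{w}$ as a suffix and therefore $\overline{w}$ or $w$ as a prefix, i.e., strictly earlier in $p$, contradicting the choice of $p$. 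Thus $x=lps(p)$ is a preimage of $\{w,\overline{w}\}$. For (b): suppose $\Phi(y)=\overline{\Phi(z)}$ with $\Phi(y)\neq\Phi(z)$. Then $y$, a palindromic suffix of $\Phi(y)$, is a prefix of $\Phi(z)$, and symmetrically $z$ is a prefix of $\Phi(y)$. Since $\Phi(y)$ ends at the first occurrence of $y$ and $|y|<H$, every occurrence of $\Phi(z)$ (which begins with $y$) must start strictly after the start of the recorded occurrence of $\Phi(y)$; so the first occurrence of $\Phi(y)$ strictly precedes that of $\Phi(z)$, and by symmetry also the reverse --- a contradiction. With (a) and (b) supplied, your argument coincides with the paper's proof; the closing ``consistency check'' via Theorem~\ref{BRconj_finite} is fine as a sanity check but plays no role in the argument.
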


\begin{proof} Let us define a mapping  $f: S\to T$, where
$$S = \{ x \in \mathcal{L}({\mathbf u}) \colon
x \notin  \mathcal{L}(q), \,|x| < H ,\, x = \overline{x} \}$$ and
$$T= \bigl\{ \{ w, \overline{w}\} \colon  w \in {\mathcal{L}}_H(\uu),\, w
\neq \overline{w}\bigr\}\,.$$ Let $x$ be a palindrome from $S$ and
$i$ be the first occurrence of $x$ in ${\mathbf u}$. Put $w =
u_{i+|x|-H} \cdots u_{i+{|x|-1}}$. It means that $w$ is a factor
of ${\mathbf u}$ of length $H$ and $x$ is a suffix of $w$. Since
$H
> |x|$,  the factor $w$ is not a palindrome - otherwise it
contradicts the fact that $i$ is the first occurrence of the palindrome
$x$. We put $f(x) = \{w, \overline{w}\}$.\\

To show that $f$  is surjective, we consider $w \in
{\mathcal{L}}_H(\uu)$ such that $w \neq \overline{w}$. Let $p$ be
the prefix of $\mathbf{u}$ which ends in the first occurrence of
$w$ or $\overline{w}$ in $\mathbf{u}$.  Since $|p|\geq H = |w|
> |q|$, we  have according to Corollary  \ref{prodlouzeni} that $D(q) = D(p)$ and consequently, $lps(p)$ is
unioccurrent in $p$, which implies that $lps(p)$ is not a factor
of $q$. Moreover, $lps(p)$ is shorter than $H$ - otherwise it
contradicts the choice of the prefix $p$. We found $x =
lps(p) \in S$ such that  $f(x) = \{w,\overline{w}\}$, i.e.,  $f$
is surjective.

To show that $f$ is injective, we consider two palindromes $ y,z
\in S$ and we denote $f(y) = \{w_y, \overline{w_y}\}$ and $f(z) =
\{w_z, \overline{w_z}\}$. From the definition of $w_x$ we know
that the palindrome $x$ occurs as a factor of $w_x$ exactly once,
namely  as its suffix. It means that $x$ equals $lps(w_x)$.
 Let us suppose that $f(y) = f(z)$. 
 We have to discuss two cases.

\begin{enumerate}
\item Case  $w_y=w_z$.  It gives $ lps(w_y) = lps(w_z) $ and thus
 $y=z$.
\item Case $w_y=\overline{w_z}$. It implies that $y$ is a prefix of
$w_z$ and $z$ is a prefix  of $w_y$.   The fact that $y$ is a
prefix of $w_z$ forces  the first occurrence of $w_y$ to be
strictly smaller  than the first occurrence of $w_z$.
Simultaneously, since $z$ is a prefix of $w_y$, the first
occurrence of $w_z$ is strictly smaller  than the first occurrence
of $w_y$ - a contradiction.
\end{enumerate}

Consequently, the assumption $f(y) = f(z)$
implies $z=y$ and  the mapping $f$ is injective as well.

Existence of the  bijection $f$ between the finite sets $T$  and
$S$ means $\# T = \#S$.
Since from the definition of $T$ it follows that  $ \C_{\uu}(H)-\P_{\uu}(H) = 2\#T$,
the equality  stated in the lemma is proven.

\end{proof}
\begin{pozn}
As it was pointed out by Bojan Ba\v si\'{c}, Lemma~\ref{lubka1} may be stated in a~more general form for $H>|q|$, then the equality changes to 
$$\C_{\uu}(H)-\P_{\uu}(H)=2\# \{ x \in \mathcal{L}({\mathbf u}) \colon
x \notin  \mathcal{L}(q), \,|x| < H ,\, x = \overline{x} \}-2(H-|q|-1).$$
Thanks to him, we added the assumption $H=|q|+1$ in Lemma~\ref{lubka1} necessary for the validity of the statement.
\end{pozn}

\begin{lemma}\label{lubka2}  Let ${\mathbf u}$ be an infinite word with language closed under reversal and finite defect.
If $q$ is its
prefix satisfying $D({\mathbf u})= D(q)$, then for any prefix $p$
of  ${\mathbf u}$  such that $|p|> |q|$ the number
$$\#\{x \in {\mathcal{L}}(p)\colon  x \
\text{is a palindrome of length at most $|q|$ which is not contained
in $q$}\} \ +  \sum_{n=|q|+1}^{|p|}\!\!\P_p(n)$$ equals $|p|-|q|$.
\end{lemma}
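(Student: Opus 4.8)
The plan is to reduce the stated identity to a single clean enumeration: the number of distinct palindromic factors of $p$ that do not occur in $q$, and then to split this count according to length. First I would record that the defect is non-decreasing along prefixes. Indeed, Corollary~\ref{prodlouzeni} gives $D(w)\le D(wb)$ for every finite word $w$ and letter $b$, so $D$ is monotone on prefixes of $\uu$. Since $D(\uu)=D(q)=\sup\{D(w)\colon w \text{ a prefix of }\uu\}$, any prefix $p$ with $|p|>|q|$ satisfies $D(q)\le D(p)\le D(\uu)=D(q)$, whence $D(p)=D(q)$.

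Next I would invoke the definition of defect directly. As $D(w)$ equals $(|w|+1)$ minus the number of distinct palindromic factors of $w$ (including the empty word), the number of distinct palindromes in $p$ is $|p|+1-D(q)$ and the number in $q$ is $|q|+1-D(q)$. Because $q$ is a prefix of $p$, every palindromic factor of $q$ is also a palindromic factor of $p$, so the palindromes of $q$ form a subset of those of $p$. Subtracting the two counts then shows that the number of palindromic factors of $p$ that are \emph{not} factors of $q$ is exactly $(|p|+1-D(q))-(|q|+1-D(q))=|p|-|q|$.

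It remains to split this number by length. Every palindromic factor of $p$ of length greater than $|q|$ is automatically not a factor of $q$, since a factor of $q$ has length at most $|q|$; moreover such palindromes have length at most $|p|$. These are precisely the palindromes enumerated by $\sum_{n=|q|+1}^{|p|}\P_p(n)$. The remaining palindromic factors of $p$ not occurring in $q$ are exactly the palindromes of length at most $|q|$ that are not contained in $q$, i.e. the first set appearing in the statement. The empty word lies in $q$ and is therefore excluded from both contributions, so it causes no difficulty. Adding the two contributions recovers the full count $|p|-|q|$, which is the desired equality.

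The computation is short, and I do not anticipate a genuine obstacle. The only point requiring care is the bookkeeping of the length partition: one must check that the palindromic factors of $p$ not occurring in $q$ split disjointly and exhaustively into those of length at most $|q|$ (the first set) and those of length greater than $|q|$ (covered exactly once by the palindromic-complexity sum), with no factor counted twice or omitted. This is immediate once $D(p)=D(q)$ is established, so the heart of the argument is really the defect-stabilization step in the first paragraph.
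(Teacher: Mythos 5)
Your proof is correct, and its final step (partitioning the palindromic factors of $p$ that avoid $q$ into those of length at most $|q|$ and those of length greater than $|q|$, the latter counted exactly by $\sum_{n=|q|+1}^{|p|}\P_p(n)$) coincides with the paper's. Where you genuinely diverge is in how the core identity $\#\{x\in\mathcal{L}(p)\setminus\mathcal{L}(q)\colon x=\overline{x}\}=|p|-|q|$ is obtained. The paper proves it by an explicit bijection: each such palindrome $x$ is sent to the minimal index $i\in\{|q|+1,\dots,|p|\}$ for which $x$ occurs in the prefix $u^{(i)}$; minimality forces $x=lps(u^{(i)})$, which gives injectivity, and surjectivity is extracted from $D(q)=D(\uu)$ via Corollary~\ref{prodlouzeni}, since the unioccurrence of $lps(u^{(i)})$ in $u^{(i)}$ guarantees $lps(u^{(i)})\notin\mathcal{L}(q)$. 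You instead argue purely arithmetically: monotonicity of the defect along prefixes (itself a consequence of Corollary~\ref{prodlouzeni}) squeezed against the supremum definition yields $D(p)=D(q)$, and then subtracting the palindrome counts $(|p|+1-D(p))-(|q|+1-D(q))$, legitimate because $\mathcal{L}(q)\subseteq\mathcal{L}(p)$, gives the identity in one line. Both routes ultimately rest on the same Droubay--Justin--Pirillo mechanism, but they use it at different points: you use it only for monotonicity, the paper for the unioccurrence of longest palindromic suffixes. Your version is shorter, avoids the injectivity/surjectivity bookkeeping, and isolates cleanly the only place the hypothesis $D(\uu)=D(q)$ enters (the stabilization $D(p)=D(q)$); indeed it proves the statement for any pair of finite words $q$, $p$ with $q$ a prefix of $p$ and $D(p)=D(q)$. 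The paper's bijection is more informative: it exhibits \emph{which} palindrome is created at each position $i$, namely $lps(u^{(i)})$, unioccurrent in $u^{(i)}$ --- exactly the mechanism reused in the proof of Lemma~\ref{lubka1} and in Corollary~\ref{charakterizace_defekt}, which makes the two lemmas visibly parallel in the paper's presentation.
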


\begin{proof}
At first we will show the equality
 \begin{equation}\label{pomoc}|p| - |q| = \# \{x
\in {\mathcal{L}}(p) \setminus {\mathcal{L}}(q)\colon
x=\overline{x}\}\,.\end{equation}  Let us denote by $u^{(i)}$ the
prefix of ${\uu}$ of length $i$. For any palindrome $x \in
{\mathcal{L}}(p) \setminus {\mathcal{L}}(q)$ we find the minimal index $i$
such that  $x$ occurs in $u^{(i)}$. Since $x \in {\mathcal{L}}(p)
\setminus {\mathcal{L}}(q)$,   we have $|q| < i \leq |p|$. Thus we map any
element of $ \{x \in {\mathcal{L}}(p) \setminus {\mathcal{L}}(q)\colon
x=\overline{x}\}$  to an index $i \in \{ |q|+1, |q|+2, \ldots,
|p|\}$.

 Let us look at the details of this
mapping.  The minimality of $i$ guarantees that $x$ is
unioccurrent in $u^{(i)}$. Palindromicity of $x$ gives that $x
=lps(u^{(i)})$. It implies that no two different  palindromes are
mapped to the same index $i$, i.e.,  the mapping is injective.

Since $D(q) = D(\uu)$, according to Corollary \ref{prodlouzeni},
$lps(u^{(i)})$  is unioccurrent in $u^{(i)}$ and thus
$lps(u^{(i)})\notin  {\mathcal{L}}(q)$.      Thus any index $i$
such that $|q| <i  \leq |p|$ has its preimage $x=lps(u^{(i)})$.
Therefore the mapping is a bijection and its domain and range have
the same cardinality as stated in \eqref{pomoc}.

To finish the proof, we split elements of  $\{x \in {\mathcal{L}}(p) \setminus
{\mathcal{L}}(q)\,:\, x=\overline{x}\}$ into two disjoint parts:
 elements of length smaller than or equal to $ |q|$ and elements of length
greater than $|q|$. Since
$$\#\{x \in {\mathcal{L}}(p) \setminus {\mathcal{L}}(q)\,:\,
x=\overline{x}, \ |x| >|q|\} = \#\{x \in {\mathcal{L}}(p) \,:\,
x=\overline{x}, \ |x| >|q|\}  = \sum_{n=|q|+1}^{|p|}\!\!\P_p(n)\,,
$$
 the statement of Lemma~\ref{lubka2} is proven.

\end{proof}

Now we can complete the proof of Theorem \ref{konecne}.
\begin{proof}[Proof of Theorem~\ref{konecne}]
Finiteness of defect means that there exists a  constant $L\in
\mathbb{N}$ such that $D(\uu) = D(q)$ for any prefix $q$ of $\uu$
which is longer than or of length equal to $L$. On the other hand, finiteness of the sum
$\sum_{n=0}^{+\infty}T_{\mathbf u}(n)$ together with the fact
$0\leq T_{\mathbf u}(n) \in \mathbb{Z}$ for any $n \in \mathbb{N}$
implies that there exists a  constant $M\in \mathbb{N}$ such that
$ T_{\mathbf u}(n) = 0$ for any  $n > M$. Let us fix an integer
$H> \max\{L,M\}$ and denote by $q$ the prefix of $\uu$ of length
$|q|=H-1$. Consequently,
$$  T_{\mathbf u}(n) = 0 \ \ \hbox{ for any \ }  n\geq H\quad
\hbox{and } \quad D(\uu) = D(q)\,.
$$
 In order to show the equality~\eqref{BRconj}, it thus
remains to show $2D(q)=\sum_{n=0}^{H-1}T_{\uu}(n)$.

Let us consider a~prefix $p$ of $\uu$ containing all factors of
length $H$. In this case $p$ is longer than $q$, thus it holds by
Corollary~\ref{prodlouzeni} that $D(q)=D(p)$. Using
Theorem~\ref{BRconj_finite}, we have
$$2D(p)=\sum_{n=0}^{|p|}T_p(n)=\sum_{n=0}^{H-1}T_{p}(n)+\sum_{n=H}^{|p|}T_p(n)=\sum_{n=0}^{H-1}T_{\uu}(n)+\sum_{n=H}^{|p|}T_p(n),$$
where the last equality is due to the fact that $p$ contains all
factors of length $H$. It remains to prove that
$\sum_{n=H}^{|p|}T_p(n)=0$. Let us rewrite the sum by definition.
\begin{equation}\label{lubka3}\begin{array}{rcl}\sum_{n=H}^{|p|}T_p(n)&=&\sum_{n=H}^{|p|}\left(\C_p(n+1)-\C_p(n)+2-\P_p(n+1)-\P_p(n)\right)\\
&=&-\C_p(H)+2(|p|-H+1)-2\sum_{n=H}^{|p|}\P_p(n)+\P_p(H)\\
&=&-\C_{\uu}(H)+2(|p|-H+1)-2\sum_{n=H}^{|p|}\P_p(n)+\P_{\uu}(H),
\end{array}
\end{equation}
where in the last equality we again used the fact that $p$
contains all factors of length $H$. This fact also allows us to
rewrite the set $\{x \in {\mathcal{L}}(p)\, : \,  x \notin
{\mathcal{L}}(q)\,, \ x=\overline{x}\,,\ |x|\leq |q| \} $ from
Lemma \ref{lubka2} as $\{x \in {\mathcal{L}}(\uu)\, : \,  x \notin
{\mathcal{L}}(q)\,, \ x=\overline{x}\,, \ |x| <H\} $. Denote the
cardinality of this set by $B$.

In this notation,  Lemma \ref{lubka1} and Lemma \ref{lubka2} say
$$
\C_{\uu}(H)-\P_{\uu}(H) =2B \quad \hbox{and } \quad  B+
\sum_{n=H}^{|p|}\!\!\P_p(n)= |p| -H+1\,.$$ This implies that the
last expression in \eqref{lubka3} is zero as desired.

\end{proof}

\section{Proof of Theorem \ref{finiteness}}

If an infinite word $\uu$ is periodic with language closed under reversal, then $D(\uu)<+\infty$ and $\sum_{n=0}^{+\infty}T_{\mathbf u}(n)<+\infty$, as shown in~\cite{BrRe-conjecture}.
Consequently, we will limit our considerations in the sequel to aperiodic words.
\begin{prop}\label{charakterizace_T}
If $\uu$ is an aperiodic infinite word with language closed under reversal and $N$ is an integer,
then $T_{\mathbf u}(n)=0$ for all $n \geq N$ if and only if for any factor $w$ such that $|w| \geq N$,
 any factor longer than $w$ beginning in $w$ or $\overline{w}$ and ending in $w$ or $\overline{w}$, with no other
 occurrences of $w$ or $\overline{w}$, is a palindrome.
\end{prop}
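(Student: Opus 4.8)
The plan is to prove both implications through the graph-theoretic reformulation of Lemma~\ref{graph}: for a fixed $n$, the identity $T_{\uu}(n)=0$ holds precisely when the graph $G_n(\uu)$ with its loops deleted is a tree and every $n$-simple path forming a loop is a palindrome. It then suffices to show that the combinatorial condition of the proposition, read at the single length $n=|w|$, is equivalent to these two graph conditions; the quantification over all $n\geq N$ matches the quantification over all $w$ with $|w|\geq N$.

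The first observation is that an $n$-simple path forming a loop at a vertex $(w,\overline w)$ is itself a complete return word to $\{w,\overline w\}$: by definition its only special factors of length $n$ are its two endpoints, and since the special factor $w$ is such a factor, $w$ and $\overline w$ occur in it exactly at the ends. Hence the loop condition of Lemma~\ref{graph} is literally the combinatorial condition specialized to special $w$. For the tree condition I would use reversal symmetry: a cycle in the loop-deleted $G_n(\uu)$, traversed from one of its vertices $(w,\overline w)$, spells a factor $v$ in which $w$ or $\overline w$ occurs only at the ends (the other vertices carry different special factors), so $v$ is again a complete return word. If the combinatorial condition holds then $v=\overline v$; comparing the decomposition of $v$ into $n$-simple paths with that of $\overline v$ forces the first edge of the cycle to equal the reversal of its last edge, hence the same edge of $G_n(\uu)$, which is impossible for a genuine cycle. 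This gives the implication from the combinatorial condition to $T_{\uu}(n)=0$, connectivity of $G_n(\uu)$ being automatic from recurrence.

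For the converse I would pass to the oriented graph $\widehat G_n$ whose nodes are the individual special factors $s$ and $\overline s$ and whose directed edges are the $n$-simple paths $e\colon s\to t$, so that reversal sends $e\colon s\to t$ to $\overline e\colon\overline t\to\overline s$, and a factor is a palindrome exactly when its oriented walk coincides with its reversed dual. The key point is that crossing a tree edge $f\colon x\to r$ forward and then recrossing it (the unique way back in a tree) uses $\overline f\colon\overline r\to\overline x$ and thus lands on the \emph{opposite} orientation $\overline x$; consequently a walk that leaves $x$ and first returns to $\{x,\overline x\}$ must arrive at $\overline x$, performing a single excursion. By induction on the subtrees of the loop-deleted tree this yields the Key Lemma: every walk from a node $x$ to its dual $\overline x$ that stays in the subtree below $x$ (call it a \emph{bridge}) spells a palindrome, since it has the form $f\cdot M'\cdot\overline f$ with $M'$ a bridge one level deeper, the base case being the palindromic loops supplied by Lemma~\ref{graph}.

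It remains to deduce the statement for an arbitrary factor $w$ with $|w|=n$, not necessarily special. A non-special $w$ has forced one-sided extensions, so it sits at a fixed offset $k$ inside a unique $n$-simple path $e$, and each occurrence of $w$ or $\overline w$ marks a traversal of the undirected edge $\{e,\overline e\}$. A complete return word to $\{w,\overline w\}$ is therefore the central sub-factor, obtained by deleting exactly $k$ letters from each end, of a complete return word $V$ to the edge $\{e,\overline e\}$; writing $V=e\,M\,\overline e$ with $M$ a bridge from the Key Lemma shows $V$ is a palindrome, and a symmetric central sub-factor of a palindrome is again a palindrome. The special case $w=s$ is the degenerate instance $k=0$. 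I expect the main obstacle to be exactly this converse direction: establishing the Key Lemma with the orientations handled correctly, so that the tree genuinely forces single excursions and the palindromic-loop base case applies, together with the bookkeeping identifying a return word to a non-special $w$ with a symmetric central factor of a return word to its $n$-simple path. A minor technical point to secure along the way is that, for aperiodic $\uu$, the length-$n$ windows inside one $n$-simple path are pairwise distinct, so that occurrences of $w$ correspond bijectively to traversals of $\{e,\overline e\}$.
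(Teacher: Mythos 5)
Your converse direction contains a genuine gap: the Key Lemma is false as stated, and no argument confined to the single graph $G_n(\uu)$ can close it. A walk from a node $x$ to its dual $\overline{x}$ that stays in the subtree below $x$ need \emph{not} have the form $f\cdot M'\cdot\overline{f}$: it may return to the vertex class $\{x,\overline{x}\}$ several times, i.e., it is in general a concatenation of an odd number of loops and excursions, and a concatenation of palindromic pieces with an asymmetric pattern is not a palindrome. Concretely, suppose $G_n(\uu)$ is a single vertex whose word $x$ is itself a palindrome, carrying three distinct palindromic loops $\ell_1,\ell_2,\ell_3$; by Lemma~\ref{graph} this gives $T_{\uu}(n)=0$, yet if $\uu$ traverses the block $\ell_1\ell_2\ell_3\ell_1$, the complete return word to a factor $w$ sitting inside $\ell_1$ spells (a central part of) $\ell_1\ell_2\ell_3\ell_1$, which is not a palindrome since $\overline{\ell_1\ell_2\ell_3\ell_1}=\ell_1\ell_3\ell_2\ell_1$. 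Such a word satisfies $T_{\uu}(n)=0$ at this one length while violating the conclusion for length-$n$ factors (of course $T_{\uu}(m)>0$ for some $m>n$, so the proposition is safe — but your argument, which for a fixed $w$ with $|w|=n$ only ever consults $G_n(\uu)$, would ``prove'' the conclusion in this situation too). The hypothesis $T_{\uu}(m)=0$ for \emph{all} $m\geq N$ must enter the proof of palindromicity of a single return word, and in your proposal it never does. The paper's proof is built precisely around this point: its auxiliary claim extracts from $G_n(\uu)$ only one letter of symmetry — a return factor to $w$ begins in $wa$ and ends in $a\overline{w}$, obtained by noting that in a tree without multiple edges the closing edge of the realized walk must be $\overline{e}$, the reversal of its opening edge — and then re-applies the claim to $wa$ in $G_{n+1}(\uu)$, building the palindrome from the outside in across increasing lengths. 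Your first-return observation (a first return to $\{x,\overline{x}\}$ lands on $\overline{x}$ through the same undirected edge) is correct and is essentially the paper's Case~1, but it does not bootstrap to arbitrary realized bridges within one length.

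Two secondary remarks. In your forward direction you let a cycle of the loop-deleted graph ``spell a factor $v$'': a closed walk in $G_n(\uu)$ need not occur contiguously in $\uu$, so this step is not licit as written; the paper instead argues with \emph{realized} paths, showing via palindromicity of the return factor to the endpoint that the next realized path after $p$ is $\overline{p}$, whence uniqueness. This is repairable in your framework but needs saying. Second, your reduction of a non-special $w$ to a fixed offset inside a unique $n$-simple path $e$, including the pairwise distinctness of the inner length-$n$ windows, is sound (the forced-extension argument works for aperiodic $\uu$), but it misses the case $e=\overline{e}$, where a single traversal of $e$ already contains both $w$ and $\overline{w}$ and the complete return word to $\{w,\overline{w}\}$ lies inside one edge rather than being a central part of an edge-return word $V$; that case is harmless (a symmetrically placed central factor of the palindrome $e$) but must be treated. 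None of this, however, rescues the Key Lemma: the cross-length iteration, or an equivalent mechanism constraining which walks are realized, is the missing idea.
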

\begin{proof}
$(\Leftarrow):$ Let us show for any $n \geq N$ that the assumptions of Lemma~\ref{graph} are satisfied.
We have to show two properties of $G_n(\uu)$ for any $n \geq N$.
\begin{enumerate}
\item Any loop in $G_n(\uu)$ is a palindrome. \\
Since any loop $e$ in $G_n(\uu)$ at a vertex $(w,\overline{w})$ is a~word longer than $w$ beginning in a~special factor $w$ or $\overline{w}$ and ending in $w$ or $\overline{w}$, with no other occurrences of $w$ or $\overline{w}$, the loop $e$ is a~palindrome by the assumption.
\item
The graph obtained from $G_n(\uu)$ by removing loops is a tree. \\
Or equivalently, we have to show that in $G_n(\uu)$ there exists a~unique path between any two different vertices $(w',\overline{w'})$ and $(w'',\overline{w''})$.
Let $p$ be a~factor of $\uu$ such that $w'$ or $\overline{w'}$ is its prefix, $w''$ or $\overline{w''}$ is its suffix and $p$ has no
other occurrences of $w', \ \overline{w'}, \ w'', \ \overline{w''}$.
Let $v$ be a~factor starting in $p$, ending in $w'$ or $\overline{w'}$ and containing no other occurrences of $w'$ or $\overline{w'}$.
By the assumption the factor $v$ is a palindrome, thus $\overline{p}$ is a~suffix
of $v$.
It is then a~direct consequence of the construction of $v$ that the next factor with the same properties as $p$, i.e., representing a~path in the
undirected graph $G_n(\uu)$ between $w'$ and $w''$, which
occurs in $\uu$ after $p$, is $\overline{p}$.
This shows that there is only one such path.
\end{enumerate}
Consequently, Lemma~\ref{graph} implies that $T_n(\mathbf{u}) = 0$ for any $n \geq N$.

\medskip

\noindent $(\Rightarrow):$ First  we prove an  auxiliary claim.

\medskip

\noindent {Claim:} {\it If $\uu$ is an aperiodic infinite word with
language closed under reversal and $N$ is an integer such that
$T_{\mathbf u}(n)=0$ for all $n \geq N$, then for any  $w$ such
that $|w| \geq N$ and
 any factor $v$ longer than $w$ beginning in $w$  and ending in $w$ or $\overline{w}$, with no other
 occurrences of $w$ or $\overline{w}$, there exists a letter $ a \in \A$ such that $v$ has prefix $wa$ and suffix
 $a\overline{w}$.}\\[1mm]

 It is clear that repeated application of the previous claim to factors $w$
 of length gradually increased by one gives the proof of
 implication $(\Rightarrow)$ of Proposition~\ref{charakterizace_T}. \\[1mm]

We split the proof of the auxiliary claim into two cases.

\begin{itemize}
\item Case 1: Assume that $w$ is a special factor.

If $v$ does not contain any other special factor of length
$n=|w|$ except for $w$ and $\overline{w}$, then $v$ is a loop in the
graph $G_n(\uu)$ and according to Lemma~\ref{graph}, the factor
$v$ is a palindrome. Necessarily,  $v$ begins in $wa$ for some
letter $a$ and ends in $a\overline{w}$.

Suppose now that $v= v_0v_1\cdots v_m$ contains a special factor
$z\neq w, \overline{w}$ of length $n$ at the position $i$, i.e.,
$z = v_iv_{i+1}\cdots v_{n+i-1}$. Without loss of generality, we
consider the smallest index $i$ with this property.  The pair
$(z, \overline{z})$ is a vertex in the graph $G_n(\uu)$ and a
prefix of $v$, say $e$,  corresponds to  an edge  in $G_n(\uu)$
starting in $(w, \overline{w})$ and ending in $(z, \overline{z})$.
Since the graph $G_n(\uu)$ is a tree,  the word $v$ which
corresponds to a walk from     $(w, \overline{w})$ to  the same
vertex $(w, \overline{w})$  has a suffix $f$   representing an
edge in $G_n(\uu)$ connecting again vertices   $(z, \overline{z})$
and $(w, \overline{w})$. It means that the suffix $f$ starts in
$z$ or $ \overline{z}$  and  ends in $w$ or $ \overline{w}$. Since
$G_n(\uu)$ has no multiple edges connecting distinct vertices, necessarily  $f = \overline{e}$,
which already gives the claim.

\item Case 2: Assume $w$ is not a special factor.

 It means that
there exists a unique letter $a$ such that $wa$ belongs to the
language of $\uu$. As  the language is closed under reversal, the
factor $\overline{w}$ has a unique left extension, namely
$a$.  If $v$ starts in $w$ and ends in $\overline{w}$,
then the claim is proven.

It remains to exclude  that
$v$ begins and ends in a~non-palindromic factor $w$.  Suppose this situation happens. In this case,
there exists a unique $q$ such that $wq$ is a right special factor and it is the shortest right special factor having the prefix $w$.
The factor $wq$ has only one occurrence of the factor $w$ - otherwise we
 can find a shorter prolongation of $w$ which is right special.  Since $w$ is a suffix of $v$, we deduce that $|wq| < |v|$.
Because $wq$ is the shortest right special factor with prefix
$w$,  the factor $vq$  belongs to the language and its prefix and
suffix  $wq$ is a special factor. According to already proven  Case 1, we
have $wq = \overline{wq} = \overline{q}\,\overline{w}$.  It means
together with the inequality  $|wq| < |v|$ that $\overline{w}$  is
contained in $v$  as well - a contradiction.
\end{itemize}
\end{proof}

The proof of the implication $(\Rightarrow)$ of Proposition~\ref{charakterizace_T} is taken from~\cite{PeSt}, where we showed a~more
general statement for an infinite word whose language is closed
under a~larger group of symmetries.

\begin{coro}\label{alternace}
Let $\uu$ be an aperiodic infinite word with language closed under reversal and let $N$ be an integer. If $T_{\mathbf u}(n)=0$ for all $n \geq N$, then the occurrences of $w$ and $\overline{w}$ in $\mathbf u$ alternate for any factor $w$ of $\mathbf u$ of length at least $N$.
\end{coro}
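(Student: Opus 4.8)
The plan is to read this off directly from Proposition~\ref{charakterizace_T}, whose hypothesis $T_{\mathbf u}(n)=0$ for all $n\geq N$ guarantees that any factor spanning two neighbouring occurrences of the pair $\{w,\overline w\}$ is a palindrome. First I would dispose of the degenerate case $w=\overline w$: then the occurrences of $w$ and of $\overline w$ are literally the same set, so the alternation statement holds trivially. Hence from now on I may assume $w\neq\overline w$.

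Since the language of $\uu$ is closed under reversal, $\uu$ is recurrent, so both $w$ and $\overline w$ occur infinitely often. I would list all occurrences of factors belonging to $\{w,\overline w\}$ in increasing order of starting position, say $i_1<i_2<i_3<\cdots$; at each $i_k$ the factor of length $|w|$ starting there is either $w$ or $\overline w$, and because $w\neq\overline w$ it is exactly one of the two. To prove alternation it then suffices to show that the type occurring at $i_{k+1}$ is always the opposite of the type occurring at $i_k$.

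Fix two consecutive indices $i_k<i_{k+1}$ and put $v=u_{i_k}u_{i_k+1}\cdots u_{i_{k+1}+|w|-1}$, the factor beginning at the $k$-th occurrence and ending at the $(k+1)$-st. By construction $v$ is longer than $w$, it begins in $w$ or $\overline w$, ends in $w$ or $\overline w$, and—because $i_k$ and $i_{k+1}$ are consecutive in the merged list—has no further occurrence of $w$ or $\overline w$. Proposition~\ref{charakterizace_T} then forces $v=\overline v$. Consequently, if $w$ is the length-$|w|$ prefix of $v$, then $\overline w$ is the length-$|w|$ suffix of $v=\overline v$, and symmetrically if $\overline w$ is the prefix then $w$ is the suffix; either way the type at $i_{k+1}$ is opposite to the type at $i_k$, which is precisely the claimed alternation.

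The only point requiring genuine care—and the step I expect to be the main obstacle—is verifying that the spanning factor $v$ really satisfies the hypotheses of Proposition~\ref{charakterizace_T}, in particular its ``no other occurrences'' clause: one must check that the consecutiveness of $i_k$ and $i_{k+1}$ in the merged list rules out \emph{every} further occurrence of $w$ or $\overline w$ strictly inside $v$, including those arising from overlaps of the two neighbouring occurrences. Once this bookkeeping is settled, the palindromicity of $v$ together with the elementary observation that a prefix $w$ of a palindrome reappears reversed as its suffix closes the argument at once.
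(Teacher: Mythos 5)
Your proof is correct and follows exactly the route the paper intends: the corollary is stated there without proof, as an immediate consequence of Proposition~\ref{charakterizace_T}, and your argument (merged list of occurrences, spanning factor between consecutive occurrences is a palindrome, hence prefix $w$ forces suffix $\overline{w}$ when $w\neq\overline{w}$) is precisely that derivation. The ``bookkeeping'' you flag as the main obstacle is in fact automatic: since the list $i_1<i_2<\cdots$ records \emph{all} starting positions of $w$ or $\overline{w}$ in $\uu$, any occurrence inside $v$ would start at some $j$ with $i_k<j<i_{k+1}$, contradicting consecutiveness, so overlaps cause no difficulty.
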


The following lemma builds a bridge between Corollary~\ref{charakterizace_defekt}
and Proposition~\ref{charakterizace_T}.
\begin{lemma}\label{lps_crw}
Let $\uu$ be an aperiodic infinite word with language closed under reversal. There exists $H \in \mathbb N$ such that the longest palindromic suffix of any prefix $w$ of $\uu$ of length $|w| \geq H$
occurs in $w$ exactly once if and only if there exists $N\in \mathbb N$ such that for any factor $w$ with $|w| \geq N$, any factor longer than $w$ beginning in $w$ or $\overline{w}$ and ending in $w$ or $\overline{w}$, with no other occurrences of $w$ or $\overline{w}$, is a palindrome.
\end{lemma}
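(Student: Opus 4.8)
The plan is to prove the two implications separately. It is worth keeping in mind that the left-hand condition is exactly the unioccurrence characterization of finite defect from Corollary~\ref{charakterizace_defekt}, while the right-hand condition is the complete-return-word description of ``$T_{\mathbf u}(n)=0$ for all $n\geq N$'' furnished by Proposition~\ref{charakterizace_T}; so this lemma is the genuine bridge, and I would argue directly through longest palindromic suffixes and complete return words rather than through any (non-kosher) limit transition.

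For the implication $(\Leftarrow)$ I would proceed in two steps. First, I claim that if the right-hand condition holds with constant $N$, then every prefix $p$ of $\uu$ with $|lps(p)|\geq N$ has $lps(p)$ unioccurrent in $p$: writing $s=lps(p)$, if $s$ occurred at least twice in $p$, then the factor of $p$ lying between its last two occurrences would be a complete return word of the palindrome $s=\overline s$ with $|s|\geq N$, hence a palindrome by hypothesis, and being strictly longer than $s$ it would be a palindromic suffix of $p$ longer than $lps(p)$, a contradiction. Second, I would force $|lps(p)|\geq N$ for all sufficiently long $p$. Take the suffix $w$ of $p$ of length $N$; if $w$ is a palindrome we are done. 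Otherwise, for $p$ longer than all first occurrences of length-$N$ factors, the occurrence of $w$ ending $p$ is preceded by an occurrence of $\overline w$ (by Corollary~\ref{alternace}, applicable since the right-hand condition yields $T_{\mathbf u}(n)=0$ for $n\geq N$ via Proposition~\ref{charakterizace_T}), and the factor joining that occurrence of $\overline w$ to the end of $p$ is a return-type factor, hence a palindrome, and has length $>N$. Combining the two steps produces an $H$ beyond which every longest palindromic suffix is unioccurrent, which is precisely the left-hand condition.

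For the implication $(\Rightarrow)$ I would first dispose of the palindromic case, which is clean. Assume the left-hand condition with constant $H$, set $N\geq H$, and let $w=\overline w$ with $|w|\geq N$; a return-type factor is then a complete return word $v$ of $w$. Among $\{v,\overline v\}$ name $v$ to be the one occurring first in $\uu$, let $p$ be the prefix ending at that first occurrence, and put $s=lps(p)$, which is unioccurrent because $|p|\geq|v|>|w|\geq H$. Since $w$ occurs twice in $v\subseteq p$ it is not unioccurrent, so $|s|>|w|$; and $|s|\geq|v|$, for a palindromic suffix $s$ of $v$ with $|w|<|s|<|v|$ would have $w$ as a prefix as well as a suffix and thereby insert a third occurrence of $w$ into the complete return word $v$. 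Thus $v$ is a suffix of the palindrome $s$, so $\overline v$ is a prefix of $s$. If $v\neq\overline v$ then $|s|>|v|$ and $\overline v$ occurs strictly before the chosen occurrence of $v$, contradicting that $v$ is the first-occurring of the pair. Hence $v=\overline v$.

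\textbf{The main obstacle} is the passage from palindromic $w$ to \emph{arbitrary} $w$ in $(\Rightarrow)$; this is exactly the implication $D(\mathbf u)<+\infty\Rightarrow\sum T_{\mathbf u}(n)<+\infty$ whose earlier proof contained a gap. Here one must rule out return-type factors running from $w$ to $w$ or from $\overline w$ to $\overline w$ (which can never be palindromes when $w\neq\overline w$), and show the surviving $w$-to-$\overline w$ factors are palindromes. My plan is to deduce this from the palindromic case already established: inside a long palindrome $W$ with palindromic complete return words, every occurrence of a shorter factor $w$ is mirrored by an occurrence of $\overline w$ at the reflected position, which should force the alternation of $w$ and $\overline w$ (Corollary~\ref{alternace}) and the palindromicity of the connecting factors. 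The delicate point, and where the previous argument broke down, is that when the length-$|w|$ suffix is \emph{not} a palindrome the quantity $lps(p)$ may be strictly shorter than $|w|$, so the first-occurrence argument of the palindromic case no longer pins $v$ down directly; making the choice of the controlling palindrome $W$ uniform in $w$ and correctly transferring the palindrome property across this change of length is the step that I would have to carry out with the greatest care.
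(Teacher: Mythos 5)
Your $(\Leftarrow)$ direction is correct and is essentially the paper's own argument: first force $|lps(p)|\geq N$ for every sufficiently long prefix $p$ via a palindromic return factor from a preceding occurrence of $\overline{w}$, then get unioccurrence of $lps(p)$ because a repeated occurrence would yield a longer palindromic suffix (the paper obtains the needed alternation of $w$ and $\overline{w}$ implicitly rather than by citing Corollary~\ref{alternace}, but that is cosmetic). Your palindromic case of $(\Rightarrow)$ is also sound. The genuine gap is exactly where you flag it: the extension of $(\Rightarrow)$ to non-palindromic $w$ is not carried out, and the plan you sketch would not work as stated. Corollary~\ref{alternace} presupposes $T_{\uu}(n)=0$ for all $n\geq N$, which, via Proposition~\ref{charakterizace_T}, is precisely the conclusion this direction is feeding into, so it cannot be invoked here without circularity; and the ``controlling palindrome $W$'' is not available a priori --- nothing in the left-hand hypothesis hands you a palindrome long enough to contain two occurrences of a given $w$, so the mirroring argument has no starting point. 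Since this is the historically defective step (the gap noted in \cite{Basic12}), leaving it as a plan means the lemma is not proved.

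The missing idea --- and how the paper closes the step --- is that no reduction from general $w$ to palindromic $w$ is needed: the first-occurrence/$lps$ mechanism you used for the complete return word of a palindromic $w$ can be applied directly to the return-type factor itself. Fix $w$ with $|w|\geq H$, suppose some factor of the prescribed form (beginning and ending in $w$ or $\overline{w}$, no other occurrences of either) is non-palindromic, and among all such take the one, $r$, whose first occurrence in $\uu$ is earliest; let $p$ be the prefix of $\uu$ ending at that occurrence and $s=lps(p)$, which is unioccurrent in $p$ since $|p|>H$. Now compare lengths: if $|s|\leq |w|$, then $s$, a palindromic suffix of the terminal $w$ or $\overline{w}$, occurs a second time in the initial $w$ or $\overline{w}$ of $r$, contradicting unioccurrence; if $|w|<|s|<|r|$, palindromicity of $s$ manufactures a third occurrence of $w$ or $\overline{w}$ inside $r$, contradicting its form; $|s|=|r|$ makes $r$ a palindrome; and $|s|>|r|$ places $\overline{r}$ --- which is again non-palindromic and of the same form, the defining property being symmetric in the pair $\{w,\overline{w}\}$ --- strictly before $r$, contradicting minimality. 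This four-case analysis is uniform in $w$, needs no alternation statement and no enveloping palindrome, and is the one step your proposal lacks.
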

\begin{proof}
$(\Rightarrow):$ We will show that $N$ may be set equal to $H$. Let us proceed by contradiction. Suppose there exists a~factor $w \in \Lu$ such that $|w| \geq H$
and there exists a~non-palindromic factor of $\uu$ longer than $w$ beginning in $w$ or $\overline{w}$ and ending in $w$ or $\overline{w}$, with no other occurrences of $w$ or $\overline{w}$.
Let us find the first non-palindromic factor of the above form in $\uu$ and let us denote it $r$.
Let $p$ be the prefix of $\uu$ ending in the first occurrence of $r$ in $\uu$, i.e., $p = tr$ for some word $t$ and $r$ is unioccurrent in $p$.
Denote by $s$ the longest palindromic suffix of $p$. By the assumption, $s$ is unioccurrent in $p$.
No matter how long the suffix $s$ is, we will obtain a contradiction.
\begin{enumerate}
\item If $|s| \leq |w|$, then we have a contradiction to the unioccurrence of $s$.
\item If $|r| > |s| > |w|$, then we can find at least $3$ occurrences of $w$ or $\overline{w}$ in $r$ which is a
contradiction to the form of $r$.
\item The equality $|r| = |s|$ contradicts the fact that we
supposed $r$ to be non-palindromic.
\item Finally, if $|r| < |s|$, then there is an occurrence of the mirror image of $r$ which is a non-palindromic factor having the same properties as $r$ which occurs before $r$ and contradicts the choice of $p$.
\end{enumerate}

\noindent $(\Leftarrow):$ Take a~prefix containing all factors of length $N$. Set $H$ equal to its length.
Let us show that any prefix $p$ of length greater than or equal to $H$ has $lps(p)$ of length greater than or equal to $N$.
Consider a~suffix of $p$ of length $N$, say $w$. Either $w$ is a~palindrome, then $lps(p)$ is of length greater than or equal to $N$. Or $w$ is not a~palindrome, then we find a~suffix of $p$ beginning in $\overline{w}$ and containing exactly two occurrences of $w$ or $\overline{w}$. Such a~suffix exists since all factors of length $N$ are contained in $p$. By assumptions, such a~suffix is a~palindrome, hence $lps(p)$ is longer than $N$.

Any prefix $p$ of $\mathbf u$ of length greater than or equal to $H$ has $lps(p)$ unioccurrent. Assume there are more occurrences of $lps(p)$
in $p$ and consider its suffix $v$ starting in the last-but-one occurrence of $lps(p)$. Since the length of $lps(p)$ is greater than or equal to $N$, the factor $v$ is a~palindrome by assumptions, which contradicts the choice of $lps(p)$.
 \end{proof}

\begin{proof}[Proof of Theorem~\ref{finiteness}]
For periodic words, the statement was shown in~\cite{BrRe-conjecture}.
If $\uu$ is aperiodic, then the statement is a~direct consequence of Lemma~\ref{lps_crw}, Corollary~\ref{charakterizace_defekt}, and Proposition~\ref{charakterizace_T}.
\end{proof}

\section*{Acknowledgments}

This work was supported by the Czech Science Foundation grant GA\v
CR 201/09/0584 and  by the grant MSM6840770039 of the Ministry of
Education, Youth, and Sports of the Czech Republic.

We would like to thank Bojan Ba\v si\'c for his careful reading, useful remarks and in particular and correction of a~lemma.

\end{document}